\newcommand{\Rmnum}[1]{\expandafter\@slowromancap\romannumeral #1@}
\theoremstyle{plain}
\newtheorem{theorem}{Theorem}[section]
\newtheorem{proposition}[theorem]{Proposition}
\newtheorem{corollary}[theorem]{Corollary}
\theoremstyle{definition}
\newtheorem{definition}[theorem]{Definition}
\newtheorem{example}[theorem]{Example}
\newtheorem{problem}[theorem]{Problem}
\allowdisplaybreaks \setlength{\textwidth}{460pt}
\title[unbounded continuous functionals]{Continuous functionals for unbounded convergence in Banach lattices}
\date{\today}
\keywords{Riesz space, Banach lattice, unbounded order convergence, unbounded norm convergence, unbounded absolute weak convergence, unbounded absolute weak* convergence.}
\subjclass[2010]{46A40, 46B42}
\author[Z. Wang]{Zhangjun Wang$^{1}$}
\address{$^1$ The first author:School of Mathematics, Southwest Jiaotong University,
	Chengdu, Sichuan,
	China, 610000.}
\email{zhangjunwang@my.swjtu.edu.cn}
\author[Z. Chen]{Zili Chen$^{2}$}
\address{$^2$ The second author:School of Mathematics, Southwest Jiaotong University, Chengdu, Sichuan,
	China, 610000.}
\email{zlchen@home.swjtu.edu.cn}
\author[J. Chen]{Jinxi Chen$^{3}$}
\address{$^3$ The third author: School of Mathematics, Southwest Jiaotong University, Chengdu, Sichuan,
	China, 610000.}
\email{jinxichen@home.swjtu.edu.cn}
\begin{document}

\begin{abstract}
Recently, the different types of unbounded convergences $(uo, un, uaw, ua\\w^*)$ in Banach lattices were studied. In this paper, we study the continuous functionals with respect to unbounded convergences. We first characterize the continuity of linear functionals for these convergences. Then we define the corresponding unbounded dual spaces and get their exact form. Based on these results, we discuss order continuity and reflexivity of Banach lattices. Some related results are obtained as well.
\end{abstract}
	
\maketitle

\section{Introduction}
A net $(x_\alpha)_{\alpha\in A}$ in a Riesz space $E$ is order convergent to $x\in E$ (write $x_\alpha\xrightarrow{o}x$) if there exists a net $(y_\beta)$, possibly over a different index set, such that $y_\beta\downarrow0$ and for each $\beta\in B$ there exists $\alpha_0\in A$ satisfying $|x_\alpha-x|\leq y_\beta$ for all $\alpha\geq \alpha_0$. The \emph{unbounded order convergence} is considered firstly by Nakano in \cite{N:48} and introduced in \cite{D:64,W:77}. A net $(x_\alpha)$ in a Banach lattice $E$ is unbounded order (resp. norm, absolute weak) convergent to some $x$, denoted by $x_\alpha\xrightarrow{uo}x$ (resp. $x_\alpha\xrightarrow{un}x$, $x_\alpha\xrightarrow{uaw}x$), if the net $(|x_\alpha-x|\wedge u)$ converges to zero in order (resp. norm, weak) for all $u\in E_+$. A net $(x_\alpha^\prime)$ in a dual Banach lattice $E^\prime$ is unbounded absolute weak* convergent to some $x^\prime$, denoted by $x_\alpha^\prime\xrightarrow{uaw^*}x^\prime$, if $|x^\prime_\alpha-x^\prime|\wedge u^\prime\xrightarrow{w^*}0$ for all $u^\prime\in E_+^\prime$. Recently, there are different kind results involved these convergence (see \cite{GTX:16,GLX:18,KMT:16,Z:16,W:19}). In \cite{GTX:16,GLX:18}, some properties of $uo$-convergence in Riesz spaces and Banach lattices be studied. For the properties of $un$, $uaw$ and $uaw^*$-convergence, we refer to \cite{KMT:16,Z:16,W:19}.

It can be easily verified that, in $l_p (1\leq p<\infty)$, $uo$, $un$ and $uaw$ and $uaw^*$-convergence of nets are the same as coordinate-wise convergence.  In $L_p(\mu) (1\leq p<\infty)$ for finite measure $\mu$, $uo$-convergence for sequences is the same as almost everywhere convergence, $un$ and $uaw$-convergence for sequences are the same as convergence in measure. In $L_p(\mu) (1< p<\infty)$ for finite measure $\mu$, $uaw^*$-convergence for sequences is also the same as convergence in measure.

In \cite{GLX:18}, Gao et al. studied the continuity of the linear functionals for $uo$-convergence. The aims of the present paper is the continuity of linear functionals for different types of unbounded convergences $(uo; un; uaw; uaw^*)$ in Banach lattices. A linear functional $f$ on a Banach lattice $E$ is said to be $uo$ (resp. $un$, $uaw$, $uaw^*$)-continuous whenever $f(x_\alpha)\rightarrow0$ for all $uo$ (resp. $un$, $uaw$, $uaw^*$)-null net $(x_\alpha)$ in $E$. In the first part of the paper, we investigate the continuity of the linear functional $f$ and prove that the carriers of $f$ are finite-dimensional. Then we assume that the net $(x_\alpha)$ is norm bounded. We characterize the continuity of such functionals and obtain the exact form of the corresponding dual spaces. As an application of these results, we conclude the paper with characterizations of the order continuity and reflexivity of Banach lattices.

Recall that a Riesz space $E$ is an ordered vector space in which $x\vee y=\sup\{x,y\}$ and $x\wedge y=\inf\{x,y\}$ exists for every $x,y\in E$. The positive cone of $E$ is denoted by $E_+$, $i.e.,E_+=\{x\in E:x\geq0\}$. For any vector $x$ in $E$ define $x^+:=x\vee0,x^-:=(-x)\vee0, |x|:=x\vee(-x)$. An operator $T:E\rightarrow F$ between two Riesz spaces is said to be \emph{positive} if $Tx\geq0$ for all $x\geq0$. A net $(x_\alpha)$ in a Riesz space is called disjoint whenever $\alpha\ne \beta$ implies $|x_\alpha|\wedge|x_\beta|=0$ (denoted by $x_\alpha\perp x_\beta$). A set $A$ in $E$ is said to be \emph{order bounded} if there exsits some $u\in E_+$ such that $|x|\leq u$ for all $x\in A$. The solid hull $Sol(A)$ of $A$ is the smallest solid set including $A$ and it equals the set $Sol(A):=\{x\in E:\exists y\in A,|x|\leq|y|\}.$ An operator $T:E\rightarrow F$ is called order bounded if it maps order bounded subsets of $E$ to order bounded subsets of $F$. A Banach lattice $E$ is a Banach space $(E, \Vert\cdot\Vert)$ such that $E$ is a Riesz space and its norm satisfies the following property: for each $x, y\in E$ with $|x|\leq|y|$, we have $\Vert x\Vert\leq\Vert y\Vert$. Recall that a vector $e > 0$ in an Banach lattice lattice $E$ is an atom if for any
$u, v \in [0, e] $ with $u \wedge v = 0$, either $u = 0$ or $v = 0$. In this case, the band generated by $e$ is $span\{e\}$. Moreover, the band projection $P_e:E\rightarrow span\{e\}$ defined by
$$P_ex = \sup_n(x^+\wedge ne)-\sup_n(x^-\wedge ne)$$
exsits, and there is a unique positive linear functional $f_e$ on $E$ such that $P_e(x) = f_e(x)e$
for all $x\in E$. We call $f_e$ the coordinate functional with the atom $e$. Clearly, the span of any finite set of atoms is also a projection band.

For undefined terminology, notation and basic theory of Riesz space, Banach lattice and linear operator, we refer to \cite{AB:06,MN:91}.

\section{Results}\label{}
    Let us determine continuous functionals with respect to unbounded
convergences on $\ell_1$. 

\begin{example}
	Let $(x_\alpha)$ be a $uo$-null, $un$-null, $uaw$-null, $uaw^*$-null and disjoint net in $\ell_1$. Clearly, $(x_\alpha)$ is coordinate-wise convergence. For a vector $\lambda=(\lambda_1,\lambda_2,...,\lambda_n,...)$ satisfying $\lambda(x_\alpha)\rightarrow0$, it can be easily verified that $\lambda\in c_{00}$.
\end{example}

According to the above example, we can find that the carriers of the $uo$-continuous, $un$-continuous, $uaw$-continuous, $uaw^*$-continuous and disjoint continuous functionals $\lambda$ on $l_1$ are finite-dimensional. It is natural to ask that whether the carriers are finite-dimensional in more general situations. The following results confirm the hypothesis.

For an operator $T : E \rightarrow F$ between two Riesz spaces
we shall say that its modulus $|T|$ exists (or that $T$ possesses a modulus)
whenever $|T| := T \vee (-T)$ exists. The carrier of $T$ is denoted by $C_T$ with $C_T:=\{x\in E:|T|(|x|)=0\}^d$.
\begin{theorem}
	Let $E$ be an atomic Banach lattice and $F$ a Banach lattices, for a nonzero linear operator $T:E\rightarrow F$, assume that the modulus $|T|$ exsits, then $C_T$ is generated by finitely many atoms, if one of the following conditions is satisfied.
	\begin{enumerate}
		\item $Tx_\alpha\rightarrow0$ for every disjoint net $(x_\alpha)\subset E$.
		\item $Tx_\alpha\rightarrow0$ for every $uo$-null net $(x_\alpha)\subset E$.
		\item $Tx_\alpha\rightarrow0$ for every $un$-null net $(x_\alpha)\subset E$ and $E$ has order continuous norm.
		\item $Tx_\alpha\rightarrow0$ for every $uaw$-null net $(x_\alpha)\subset E$.
		\item $Tx_\alpha\rightarrow0$ for every $uaw^*$-null net $(x_\alpha)\subset E$ whenver $E$ is a dual Banach lattice.
	\end{enumerate}
\end{theorem}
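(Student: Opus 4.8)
The plan is to reduce the statement to a purely atomic counting problem and then rule out infinitely many ``active'' atoms by a single normalization device that feeds all five hypotheses at once.

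First I would unwind the carrier. Writing $N_T=\{x\in E:|T|(|x|)=0\}$ for the null ideal, we have $C_T=N_T^{d}$, which is a band. Since $E$ is atomic, every band is generated by the atoms it contains, so it suffices to decide, for each atom $e$, whether $e\in C_T$. If $e$ is an atom then $[0,e]=\{te:0\le t\le1\}$, so the Riesz--Kantorovich formula $|T|e=\sup\{|Ty|:|y|\le e\}$ (valid since $|T|$ exists) collapses to $|T|e=|Te|$; in particular $|T|e\ne0$ iff $Te\ne0$. A short disjointness computation (if $e\wedge|y|=te$ with $t>0$ then $e$ lies in the ideal generated by $y$) shows that an atom $e$ lies in $N_T$ when $|T|e=0$ and in $N_T^{d}=C_T$ when $|T|e\ne0$. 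Hence $C_T$ is the band generated by $A:=\{e\ \text{atom}:Te\ne0\}$, and it suffices to prove that $A$ is finite.

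Next I would argue by contradiction: if $A$ is infinite, choose pairwise distinct atoms $(e_n)\subset A$ and set $v_n:=e_n/\norm{Te_n}$, which is legitimate because $Te_n\ne0$. Then $(v_n)$ is a disjoint sequence (scaling preserves disjointness) with $\norm{Tv_n}=1$ for all $n$, so $Tv_n\not\to0$. Everything now rests on checking that, under each hypothesis, $(v_n)$ is a null net of the prescribed kind, which then forces $Tv_n\to0$ and gives the contradiction. Case (1) is immediate, as $(v_n)$ is disjoint. For (2) I use that every disjoint sequence in a Riesz space is $uo$-null. For (3), $(v_n)$ is $uo$-null, and order continuity of the norm upgrades order-null nets to norm-null nets, so $\norm{\,|v_n|\wedge u\,}\to0$ for every $u\in E_+$, i.e. $(v_n)$ is $un$-null. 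For (4), fix $u\in E_+$: the elements $|v_n|\wedge u$ are pairwise disjoint and lie in $[0,u]$, so their finite partial sums equal finite suprema and stay below $u$; testing against any $\phi\in E'_+$ gives $\sum_n\phi(|v_n|\wedge u)\le\phi(u)<\infty$, whence $\phi(|v_n|\wedge u)\to0$, and since $E'_+$ spans $E'$ we conclude $|v_n|\wedge u\xrightarrow{w}0$, i.e. $(v_n)$ is $uaw$-null. Case (5) follows from (4), since weak convergence implies weak* convergence, so $(v_n)$ is $uaw^*$-null whenever $E$ is a dual lattice.

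The main obstacle is the uniform verification that the single normalized sequence $(v_n)$ simultaneously meets all five nullity requirements; the genuinely substantive points are the order-bounded disjoint summation estimate used for $uaw$ (and, via weak $\Rightarrow$ weak*, for $uaw^*$), and the Riesz--Kantorovich identity $|T|e=|Te|$ that guarantees $Te\ne0$ so that the normalization $v_n=e_n/\norm{Te_n}$ is well defined. Everything else is bookkeeping about atomic bands, which I expect to be routine.
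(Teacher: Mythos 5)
Your argument is correct, and its engine is the same as the paper's: if the conclusion fails, one manufactures a disjoint sequence $(v_n)$ with $\Vert Tv_n\Vert=1$, and since disjoint sequences are $uo$-, $uaw$- and $uaw^*$-null in general (and $un$-null when the norm is order continuous), each of the five hypotheses forces $Tv_n\to0$, a contradiction. Where you genuinely differ is the reduction to atoms. The paper first shows that $C_T$ contains no infinite disjoint family of nonzero vectors --- which requires extracting, from $|T|x_n>0$, some $y_n\in[-x_n,x_n]$ with $Ty_n\neq0$, a Riesz--Kantorovich-type step on general order intervals --- and then runs a separate maximality-and-splitting argument (maximal disjoint family of atoms, repeated decomposition of a leftover vector orthogonal to their span) to conclude that $C_T$ is spanned by finitely many atoms. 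You instead identify $C_T$ at the outset as the band generated by $A=\{e\ \text{atom}: Te\neq0\}$, using that in an atomic lattice every band is generated by the atoms it contains, and then apply the normalization directly to atoms. This buys something real: the only modulus manipulation you need lives at an atom, where $\mathrm{span}\{e\}$ is a projection band, so it can be made rigorous, whereas the interval version has no such crutch. Two small repairs are needed. First, your parenthetical justification of $|T|e=|Te|$ --- that the Riesz--Kantorovich formula is ``valid since $|T|$ exists'' --- is not a theorem when $F$ fails to be Dedekind complete (that is essentially the open Riesz--Kantorovich problem); argue instead that $Sx:=|T|\bigl((I-P_e)x\bigr)+f_e(x)\,|Te|$ is a linear operator dominating both $T$ and $-T$, hence $|T|\le S$ and $|T|e\le Se=|Te|$, the reverse inequality being trivial. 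Second, pairwise \emph{distinct} atoms need not be disjoint ($e$ and $2e$ are distinct atoms); choose pairwise non-proportional atoms in $A$ --- if $C_T$ is not generated by finitely many atoms, infinitely many such exist, and non-proportional atoms are automatically disjoint --- after which your normalization goes through verbatim.
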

\begin{proof}
$(1)$ We claim that $C_T$ can not contain an infinite disjoint set of nonzero vectors. Suppose that there exsits infinite positive disjoint sequence of nonzero vectors $(x_n)_{n\in \mathbb{N}}$ in $C_T$. Clearly, $|T|(x_n)>0$ for all $n\in \mathbb{N}$. Hence there exsits $y_n\in [-x_n,x_n]$ such that $T(y_n)\ne 0$. Since $(y_n)$ is also a disjoint sequence, so $\big(\dfrac{y_n}{\Vert T(y_n)\Vert}\big)$ is disjoint, but for any $n\in \mathbb{N}$ on has $T\big(\dfrac{y_n}{\Vert T(y_n)\Vert}\big)=1$ and so, $\rightarrow0$ is absurd.
	
Then we prove that $C_T$ is generated by finitely many atoms. Let $X$ be a maximal disjoint family of atoms of $E$ and $A=X\cap C_T$, the linear span $B$ of $A$ is a projection band in $C_T$ since $A$ is a finite set (of atoms). If $B\ne C_T$, denoted by $C_T=B \oplus B^d$, there would exists $0 < x \in C_T$ such that $x\perp B$. Since $x$ is not an atom, there exists $u_1, y$ such that $0 < u_1, y \leq x$ and $u_1 \perp y$. Clearly, $u_1, y \in C_T$. Since $y \perp B$, $y$ is not an atom, and thus there exists $u_2,z$ such that $0 < u_2,z \leq  y$ and $u_2\perp z$. Clearly, $u_2,z \in C_T$. Repeating this process, we obtain an infinite disjoint sequence $(u_n )_{n\in\mathbb{N}}$ in $C_T$, but we have proved that the carrier of $T$ can not contain an infinite disjoint set of nonzero vectors. Hence $B=C_T$.
	
   $(2)-(5)$ It follows form \cite[Corollary~3.6]{GTX:16}, \cite[Lemma~2]{Z:16} and \cite[Lemma~2.3]{W:19} that every disjoint sequence in a Banach lattice $E$ is $uo$-null, $uaw$-null and $uaw^*$-null. According to \cite[Proposition~3.5]{KMT:16}, if $E$ has order continuous norm, every disjoint sequence in $E$ is $un$-null, so we can find that $\big(\dfrac{y_n}{\Vert T(y_n)\Vert}\big)$ is $uo$, $un$, $uaw$ and $uaw^*$-null. The rest of the proof is an application of $(1)$.
	\end{proof}
Let $F=\mathbb{R}$, we have the following result.
\begin{corollary}
	Let $E$ be an atomic Banach lattice, $f$ a nonzero linear functional on $E$ and $(x_\alpha)$ a net in $E$ such that $f(x_\alpha)\rightarrow0$. Then $f$ is the linear combination of the coordinate functionals of finitely many atoms, if one of the following conditions is satisfied.
	\begin{enumerate}
		\item $(x_\alpha)$ is disjoint.
		\item (\cite[Proposition~2.2]{GLX:18}) $x_\alpha\xrightarrow{uo}0$.
		\item (\cite[Corollary~5.4]{KMT:16}) $x_\alpha\xrightarrow{un}0$ and $E$ has order continuous norm.
		\item $x_\alpha\xrightarrow{uaw}0$.
		\item Whenever $E$ is a dual Banach lattices and $x_\alpha\xrightarrow{uaw^*}0$.
	\end{enumerate}
\end{corollary}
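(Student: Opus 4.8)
The plan is to read the Corollary off the Theorem by taking $F=\mathbb{R}$, and then to promote the structural conclusion ``$C_f$ is generated by finitely many atoms'' to the analytic statement ``$f$ is a linear combination of the corresponding coordinate functionals.'' Since the Theorem requires $|T|$ to exist, I work with an order bounded $f$, so $|f|$ exists and the Theorem applies verbatim in each of the five cases: there are atoms $e_1,\dots,e_n$ with $C_f=\Span\{e_1,\dots,e_n\}$, and this span is a projection band, so $E=C_f\oplus C_f^d$ with band projection $Px=\sum_{i=1}^n f_{e_i}(x)e_i$ onto $C_f$.

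Next I would pin down which atoms occur. For an atom $e$ one has $|f|(e)=|f(e)|$, since $[-e,e]=\{te:-1\le t\le 1\}$; hence, using that the null ideal $N_f=\{x:|f|(|x|)=0\}$ is solid, an atom $e$ lies in $C_f=N_f^d$ exactly when $f(e)\ne 0$, while every atom $h$ with $f(h)=0$ lies in $C_f^d$. Thus $e_1,\dots,e_n$ are precisely the atoms on which $f$ does not vanish, and the candidate identity is $f=\sum_{i=1}^n f(e_i)f_{e_i}$. On $C_f$ this is immediate from the formula for $Px$, so the whole Corollary reduces to the single claim that $f$ vanishes on $C_f^d$.

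The crux, and the main obstacle, is exactly this vanishing. Since $f$ annihilates $N_f$, which is order dense in the band $C_f^d=N_f^{dd}$, the vanishing would be immediate if $f$ were order continuous, for then $N_f$ is a band equal to $C_f^d$. I would therefore establish order continuity of $f$ in each case. Cases (2) and (3) are the cited known facts: order null nets are $uo$-null, so $uo$-continuity already forces order continuity, and an order continuous norm makes every functional order continuous. For (1), (4), (5) an order null net need \emph{not} be $uaw$- or $uaw^*$-null, so I cannot simply feed a downward net (for instance the atomic tails $y-P_Sy\downarrow 0$ of an element $0\le y\in C_f^d$, indexed by finite sets $S$ of atoms) into the hypothesis. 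Instead I would invoke the observation recorded in the proof of the Theorem that order bounded disjoint sequences are disjoint, $uaw$-null and $uaw^*$-null; the hypothesis then gives $f(z_n)\to 0$, and passing from $f$ to $|f|$ (choosing $|y_n|\le z_n$ with $f(y_n)$ approximating $|f|(z_n)$, the $(y_n)$ again order bounded and disjoint) yields $|f|(z_n)\to 0$ for every order bounded disjoint sequence. The classical characterisation of $\sigma$-order continuity of a positive functional through its behaviour on order bounded disjoint sequences then delivers $\sigma$-order continuity of $|f|$, hence of $f$. The point I expect to need the most care is elements of $C_f^d$ with uncountable atomic support, where $\sigma$-order continuity is not obviously enough; here I would argue with the disjoint \emph{net} version of the characterisation, which is exactly what hypotheses (1), (4), (5) are phrased to provide. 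Once $f|_{C_f^d}=0$ is in hand, $f(x)=f(Px)=\sum_{i=1}^n f(e_i)f_{e_i}(x)$ for all $x\in E$, which is the desired conclusion.
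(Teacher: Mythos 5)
You correctly identified the real difficulty: the paper obtains this Corollary from Theorem 2.2 by simply setting $F=\mathbb{R}$, but the Theorem only says that the carrier $C_f$ is spanned by finitely many atoms, and upgrading this to ``$f$ is a linear combination of their coordinate functionals'' requires exactly the vanishing of $f$ on $C_f^d$ that you isolate; your handling of cases (2) and (3) via order continuity is also sound. The gap is the lemma you invoke for cases (1), (4), (5): there is no ``classical characterisation of $\sigma$-order continuity through order bounded disjoint sequences,'' and the implication you need from it is false, even in atomic Banach lattices. Take $E=\ell_\infty$ (atomic) and let $\varphi$ be the positive functional given by the limit along a free ultrafilter $\mathcal{U}$ on $\mathbb{N}$. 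If $(z_n)$ is any disjoint sequence in $\ell_\infty$ (order bounded or not), the sets $A_n=\{k:|z_n(k)|>\varepsilon/2\}$ are pairwise disjoint, and a filter can contain at most one member of a pairwise disjoint family, so $|\varphi(z_n)|\geq\varepsilon$ for at most one $n$; hence $\varphi(z_n)\to 0$ for every disjoint sequence. Yet $\varphi$ is not $\sigma$-order continuous: the tails $x_n=\chi_{\{n,n+1,\dots\}}$ satisfy $x_n\downarrow 0$ in $\ell_\infty$ while $\varphi(x_n)=1$ for all $n$.

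Worse, this same functional shows that cases (1) and (4) of the Corollary are false as stated, so no repair of this step can exist. Disjoint families of nonzero vectors in $\ell_\infty$ are countable, so the computation above makes $\varphi$ disjoint-continuous for every net whose index set has no greatest element (and on the remaining degenerate nets no nonzero functional at all is disjoint-continuous, which would make case (1) vacuous rather than true); moreover $\varphi$ is a lattice homomorphism, so $\varphi(|x_\alpha|\wedge\mathbf{1})=\min(|\varphi(x_\alpha)|,1)$ for any net, and testing the weak convergence $|x_\alpha|\wedge\mathbf{1}\xrightarrow{w}0$ against $\varphi$ itself already forces $\varphi(x_\alpha)\to 0$; thus $\varphi$ is $uaw$-continuous. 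But $\varphi(e_k)=0$ for every atom $e_k$, so $\varphi$ is not a finite linear combination of coordinate functionals. This is consistent with Theorem 2.2, since here $N_\varphi\supset c_{00}$ and so $C_\varphi=\{0\}$: the Theorem holds vacuously, which is precisely why the paper's own ``let $F=\mathbb{R}$'' does not prove the Corollary either. Case (5), by contrast, is true, and your strategy does work for it --- but through the door you explicitly closed: in a dual Banach lattice, order-null nets \emph{are} $uaw^*$-null, because infima of downward directed nets of positive functionals are computed pointwise, so $y_\beta\downarrow 0$ in $E'$ implies $y_\beta\xrightarrow{w^*}0$; hence $uaw^*$-continuity does imply order continuity of $f$, the null ideal $N_f$ is then a band, and your decomposition argument finishes.
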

According to the above results, we can find that the $uo$-continuous, $un$-continuous, $uaw$-continuous, $uaw^*$-continuous and disjoint continuous functionals only works on finite-dimensional space, hence we study the "bounded" continuous functional for unbounded convergences in Banach lattices.

Let $E$ be a Banach lattice. A linear functional $f$ on $E$ is said to be
\emph{($\sigma$)-order continuous} if $f(x_\alpha)\rightarrow0\big(f(x_n)\rightarrow0\big)$ for any net (sequence) $(x_\alpha)\big((x_n)\big)$ in $E$ that order converges to zero. The set $E^\sim_n$ of all order continuous functionals is said the \emph{order continuous dual} of $E$. In \cite{GLX:18}, a linear functional $f$ on $E$ is said to be \emph{bounded uo-continuous} if $f(x_\alpha)\rightarrow 0$ for any norm bounded $uo$-null net $(x_\alpha)$ in $E$. The set of all bounded $uo$-continuous linear functionals on $E$ will be called the unbounded order dual (uo-dual, for short) of $E$, and will be denoted by $E^\sim_{uo}$. It is natural to consider the other duals for unbounded convergence likes $un$-continuous, $uaw$-continuous and $uaw^*$-continuous.
\begin{definition}
	Let $E$ be a Banach lattice. A bounded linear functional $f$ on $E$ is said to be \emph{bounded d (un, uaw)-continuous} if $f(x_\alpha)\rightarrow 0$ for any norm bounded disjoint ($un$-null, $uaw$-null) net $(x_\alpha)$ in $E$. The set of all bounded $d (un, uaw)$-continuous linear functionals on $E$ will be called the disjoint (unbounded norm, unbounded absolute weak) dual ($d$-dual, $un$-dual and $uaw$-dual, for short) of $E$, and will be denoted by $E^\sim_{d} (E^\sim_{un},E^\sim_{uaw})$.
	
	A bounded linear functional $f$ on $E^\prime$ is said to be \emph{bounded uaw*-continuous} if $f(x_\alpha^\prime)\rightarrow 0$ for any norm bounded $uaw^*$-null net $(x_\alpha^\prime)$ in $E^\prime$. The set of all bounded $uaw^*$-continuous linear functionals on $E^\prime$ will be called the unbounded absolute weak* dual (uaw*-dual, for short) of $E$, and will be denoted by $(E^\prime)^\sim_{uaw^*}$.
\end{definition}
The basic properties of these duals are as follow.
\begin{proposition}\label{123}
	For a Banach lattice $E$, the following holds.
	\begin{enumerate}
		\item $(E^\prime)^\sim_{uaw^*}$ is a closed ideal of $E$;
		\item $E^\sim_{uo}$ is a closed ideal of $E^\sim_n$;
		\item $E^\sim_{uaw}$, $E^\sim_{d}$ and $E^\sim_{un}$ are closed ideals of $E^\prime$.
	\end{enumerate}
\end{proposition}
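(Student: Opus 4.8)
The plan is to handle all three parts by a single template, since each asserts that a space of ``bounded $\mathcal C$-continuous'' functionals is a closed ideal of an ambient Banach lattice, where $\mathcal C$ ranges over the convergences $d, un, uaw, uaw^*$ and $uo$. Write $E^\sim_{\mathcal C}$ for the space in question and let $\mathcal L$ denote the ambient lattice: $E'$ in part (3), $E^\sim_n$ in part (2), and $E$ viewed inside $E''$ in part (1). For each $\mathcal C$ I will check three things, namely that $E^\sim_{\mathcal C}$ is a linear subspace, that it is norm-closed, and that it is solid in $\mathcal L$; together with the separate containment $E^\sim_{\mathcal C}\subseteq\mathcal L$ this yields ``closed ideal of $\mathcal L$''. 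The subspace property is immediate from linearity of evaluation.

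For norm-closedness I would exploit the uniform bound built into the definitions. If $f_n\to f$ in norm with each $f_n\in E^\sim_{\mathcal C}$, and $(x_\alpha)$ is a $\mathcal C$-null net with $\|x_\alpha\|\le M$, then splitting $f(x_\alpha)=\bigl(f-f_n\bigr)(x_\alpha)+f_n(x_\alpha)$ and estimating $|(f-f_n)(x_\alpha)|\le M\,\|f-f_n\|$ lets me make $f(x_\alpha)$ as small as desired, so $f\in E^\sim_{\mathcal C}$. The norm bound on the net is exactly what makes this estimate uniform, which is why the bounded versions of these duals are well-behaved.

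Solidity is the technical heart and rests on two ingredients. First, a \emph{domination lemma}: if $(x_\alpha)$ is norm bounded and $\mathcal C$-null and $|y_\alpha|\le|x_\alpha|$, then $(y_\alpha)$ is again norm bounded and $\mathcal C$-null. This is verified convergence-by-convergence from $0\le|y_\alpha|\wedge u\le|x_\alpha|\wedge u$ (and $|y_\alpha|\wedge|y_\beta|\le|x_\alpha|\wedge|x_\beta|$ for the disjoint case), using that order-, norm-, weak- and weak$^*$-nullity of a positive net all pass to smaller positive nets (for weak and weak$^*$ one tests against positive functionals and decomposes a general functional into its parts). Second, the Riesz--Kantorovich formula $|f|(u)=\sup\{|f(v)|:|v|\le u\}$, valid for $f$ in the order dual of a Banach lattice $X$ and $u\in X_+$ (here $X=E$ in parts (2),(3) and $X=E'$ in part (1)). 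Given $f\in E^\sim_{\mathcal C}$ and a norm bounded $\mathcal C$-null net $(x_\alpha)$, I choose $v_\alpha$ with $|v_\alpha|\le|x_\alpha|$ and $|f(v_\alpha)|\ge|f|(|x_\alpha|)-\varepsilon$; the domination lemma makes $(v_\alpha)$ bounded and $\mathcal C$-null, so $f(v_\alpha)\to0$, forcing $|f|(|x_\alpha|)\to0$ and hence $|f|\in E^\sim_{\mathcal C}$. Then for any $g$ with $|g|\le|f|$ the estimate $|g(x_\alpha)|\le|g|(|x_\alpha|)\le|f|(|x_\alpha|)\to0$ gives $g\in E^\sim_{\mathcal C}$, which is solidity.

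It remains to place each space in its claimed ambient lattice. For (3) this is automatic, since the definitions already require the functionals to be bounded, i.e.\ to lie in $E'$. For (2) I would show $E^\sim_{uo}\subseteq E^\sim_n$: an order-null net is eventually order bounded, hence (in a Banach lattice) eventually norm bounded, and order-null implies $uo$-null, so bounded $uo$-continuity forces order continuity. \textbf{The main obstacle is part (1)}, namely $(E')^\sim_{uaw^*}\subseteq E$. Order-null bounded nets in $E'$ are $uaw^*$-null, which gives $(E')^\sim_{uaw^*}\subseteq(E')^\sim_n$; but the example $E=c_0$ (where $(E')^\sim_n=\ell^\infty\supsetneq c_0$) shows order continuity on $E'$ is strictly weaker than membership in $E$, so I must additionally extract $w^*$-continuity from $uaw^*$-continuity. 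The delicate point is that a bounded $w^*$-null net need not be $uaw^*$-null (for instance the Rademacher net in $L^\infty$), so a naive Banach--Dieudonn\'e argument fails. Instead I expect to combine order continuity on $E'$ with the behaviour of $f$ on bounded disjoint nets, which are $uaw^*$-null by the disjointness results quoted in the proof of Theorem~2.1, to pin $f$ down as $w^*$-continuous; this identification is where the real work lies.
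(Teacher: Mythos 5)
Your treatment of the lattice-theoretic structure is sound and is essentially the paper's own argument: the subspace property, the $\epsilon/3$ norm-closedness estimate using the norm bound on the net, and solidity via the Riesz--Kantorovich formula combined with the domination lemma (dominated nets inherit $d$-, $uo$-, $un$-, $uaw$-, $uaw^*$-nullity) all appear in the paper's proof of part (1), with parts (2) and (3) handled by the same template. Your containments for (2) and (3) also agree with the paper; indeed you are more careful than the paper in (2), where you pass to a norm bounded tail of an order-null net before invoking bounded $uo$-continuity.

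The genuine gap is exactly where you flag it: part (1) needs the containment $(E^\prime)^\sim_{uaw^*}\subseteq E$, and your proposal offers only an expectation (``combine order continuity with behaviour on disjoint nets to pin $f$ down as $w^*$-continuous'') rather than an argument. As you correctly note, order continuity of $f$ on $E^\prime$ is strictly weaker than membership in $E$, and a bounded $w^*$-null net need not be $uaw^*$-null, so neither route closes the gap. The paper's missing ingredient is to replace $\sigma(E^\prime,E)$ by the \emph{absolute} weak* topology $|\sigma|(E^\prime,E)$: if $|x^\prime_\alpha|\xrightarrow{w^*}0$, then $|x^\prime_\alpha|\wedge u^\prime\le|x^\prime_\alpha|$ shows $x^\prime_\alpha\xrightarrow{uaw^*}0$, so a $uaw^*$-continuous functional is $|\sigma|(E^\prime,E)$-continuous, and the paper then invokes the duality $\big(E^\prime,|\sigma|(E^\prime,E)\big)^\prime=E$ to conclude $f\in E$. (Be aware that this step of the paper is itself delicate: $|\sigma|(E^\prime,E)$-null nets need not be norm bounded, and the dual of $\big(E^\prime,|\sigma|(E^\prime,E)\big)$ is in general the ideal generated by $E$ in $E^{\prime\prime}$, which coincides with $E$ only under extra hypotheses. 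Still, this duality identification is the idea your proposal lacks, and without it part (1) remains unproved.)
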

\begin{proof}
	$(1)$. Since $x_\alpha^\prime\xrightarrow{uaw^*}0\Leftrightarrow |x_\alpha^\prime|\xrightarrow{uaw^*}0$, so we assume $(x_\alpha^\prime)$ is positive. Let $f$ be a bounded $uaw^*$-continuous functional on $E^\prime$. For a net $(x_\alpha^\prime)$ satisfying $|x_\alpha^\prime|\xrightarrow{w^*}0$ in $E^\prime$, clearly, we have $x^\prime_\alpha\xrightarrow{uaw^*}0$ and $f(x_\alpha^\prime)\rightarrow0$. Since $\big(E^\prime,|\sigma|(E^\prime,E)\big)^\prime=E$, hence we have $f\in E$. So $(E^\prime)^\sim_{uaw^*}\subset E$.
	
	It is clear that $(E^\prime)^\sim_{uaw^*}$ is a linear subspace of $E$. We claim that $(E^\prime)^\sim_{uaw^*}$ is a closed ideal of $E$. Since $|f|(x)=\sup\{f(y):|y|\leq x\}$. So for any $\epsilon>0$, there exsits some $\alpha_0$ and a net $(y_\alpha^\prime)\subset E^\prime$ with such that $|f|(x_\alpha^\prime)\leq f(y_\alpha^\prime)+2\epsilon$ whenever $\alpha\geq \alpha_0$. It is clear that $(y_\alpha^\prime)$ is also $uaw^*$-null, hence we have $|f|(x_\alpha^\prime)\rightarrow0$. So $(E^\prime)^\sim_{uaw^*}$ is a sublattice of $E$. For the funcitonals $0\leq g\leq f\in (E^\prime)^\sim_{uaw^*}$. Clearly, $g(x_\alpha^\prime)\leq f(x_\alpha^\prime)\rightarrow0$, hence $g\in (E^\prime)^\sim_{uaw^*}$. So $(E^\prime)^\sim_{uaw^*}$ is a ideal of $E$. Choose some $g\in (E^\prime)^\sim_{uaw^*}$ satisfying $\Vert f-g\Vert<\epsilon$. Since $f(x_\alpha^\prime)=g(x_\alpha^\prime)+(f-g)(x_\alpha^\prime)$, so we have $|f(x_\alpha^\prime)|\leq|g(x_\alpha^\prime)|+|(f-g)(x_\alpha^\prime)|$. Hence $f\in (E^\prime)^\sim_{uaw^*}$. So $(E^\prime)^\sim_{uaw^*}$ is a closed ideal of $E$.
	
	$(2)$ and $(3)$. It is clear that order convergence implies $uo$-convergence, norm convergence implies $un$ and $uaw$-convergence. So we can get that $E^\sim_{uo}$ is a subspace of $E^\sim_n$ and $E^\sim_{uaw}$, $E^\sim_{d}$ and $E^\sim_{un}$ are subspaces of $E^\prime$. The rest of the proof is similar to $(1)$.
	\end{proof}
Recall that the order continuous part $E^a$ of a Banach lattice $E$ is given by $$E^a=\{x\in E:\text{every monotone increasing sequence in $[0,|x|]$ is norm convergent}\}.$$ According to \cite[Corollary~2.3.6]{MN:91}, it is equivalent to $$E^a=\{x\in E:\text{every disjoint sequence in $[0,|x|]$ is norm convergent}\}.$$ A Banach lattice $E$ is said to be \emph{order continuous} whenever $\Vert x_\alpha\Vert\rightarrow0$ for every net $x_\alpha\downarrow0$ in $E$. By \cite[Proposition~2.4.10]{MN:91}, $E^a$ is the largest closed ideal with order continuous norm of $E$.

The following results show some characterizations of the continuity of bounded $uo$, $un$, $uaw$, $uaw^*$ and d-continuous functionals.
\begin{theorem}\label{uaw*,uo,uaw,un-functional=d-dual}
Let $E$ be a Banach lattice and $\mathscr{F}_x$ a functional on $E^\prime$ for any $x\in E$, the following conditions are equivalent.
	\begin{enumerate}
		\item $\mathscr{F}_x\in (E^\prime)^\sim_{uaw^*}$.
		\item $\mathscr{F}_x(x_n^\prime)\rightarrow0$ for any bounded $uaw^*$-null sequence $(x_n^\prime)$ in $E^\prime$.
		\item $\mathscr{F}_x\in (E^\prime)^\sim_{uo}$.
		\item $\mathscr{F}_x(x_n^\prime)\rightarrow0$ for any bounded $uo$-null sequence $(x_n^\prime)$ in $E^\prime$.
		\item $\mathscr{F}_x\in (E^\prime)^\sim_{uaw}$.
		\item $\mathscr{F}_x(x_n^\prime)\rightarrow0$ for any bounded $uaw$-null sequence $(x_n^\prime)$ in $E^\prime$.
		\item $\mathscr{F}_x\in (E^\prime)^\sim_{d}$.
		\item $\mathscr{F}_x(x_n^\prime)\rightarrow0$ for any bounded disjoint sequence $(x_n^\prime)$ in $E^\prime$.
		\item Every disjoint sequence in $[0,|x|]$ is norm convergent to zero.
		
		In addition, if $E^\prime$ has order continuous norm, these conditions are equivalent to
		
		\item $\mathscr{F}_x\in (E^\prime)^\sim_{un}$.
		\item $\mathscr{F}_x(x_n^\prime)\rightarrow0$ for any bounded $un$-null sequence $(x_n^\prime)$ in $E^\prime$.	
	\end{enumerate}
\end{theorem}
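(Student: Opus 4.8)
The functional $\mathscr{F}_x$ is the image of $x$ under the canonical lattice embedding $E\hookrightarrow E^{\prime\prime}$, so $\mathscr{F}_x(x^\prime)=\langle x,x^\prime\rangle$ and $|\mathscr{F}_x|=\mathscr{F}_{|x|}$; observe also that condition $(9)$ says exactly that $x\in E^a$. The plan is to funnel the whole list through the single condition $(9)$. Each ``membership'' condition $(1),(3),(5),(7),(10)$ trivially implies its sequential form $(2),(4),(6),(8),(11)$, a sequence being a net. Conversely each sequential form implies $(8)$: every bounded disjoint sequence in $E^\prime$ is $uo$-, $uaw$- and $uaw^*$-null, and, when $E^\prime$ is order continuous, $un$-null, by the results quoted in the proof of the first theorem, so a functional killing the larger class of null sequences in particular kills disjoint ones. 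Hence the list collapses to the two implications $(8)\Rightarrow(9)$ and $(9)\Rightarrow(1),(3),(5),(7),(10)$.

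The equivalence $(8)\Leftrightarrow(9)$ is the dual characterization of the order continuous part, namely that $x\in E^a$ if and only if $\langle x,\cdot\rangle$ vanishes along every bounded disjoint sequence of $E^\prime$, and I would cite it from \cite{MN:91}. For completeness the nontrivial direction $(8)\Rightarrow(9)$ is argued contrapositively: if $x\notin E^a$ there is a disjoint sequence $(y_n)$ in $[0,|x|]$ and a $\delta>0$ with $\|y_n\|\ge\delta$; splitting $y_n$ along the disjoint bands of $x^+$ and $x^-$ and passing to a subsequence I may assume all $y_n$ lie in $[0,x^+]$. Choosing $\phi_n\in E^\prime_+$, $\|\phi_n\|\le1$, with $\langle y_n,\phi_n\rangle\ge\delta/2$, and then, using the Dedekind completeness of $E^\prime$ and the disjointness of the $y_n$, replacing $\phi_n$ by its component carried on $y_n$, I obtain a bounded disjoint sequence $(\psi_n)\subset E^\prime_+$ with $\langle x,\psi_n\rangle=\langle|x|,\psi_n\rangle\ge\langle y_n,\psi_n\rangle\ge\delta/2$, contradicting $(8)$.

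For the converse, fix $x\in E^a$. Since all four convergences are lattice convergences and $|\langle x,x_\alpha^\prime\rangle|\le\langle|x|,|x_\alpha^\prime|\rangle$, I may assume $x\ge0$ and each $x_\alpha^\prime\ge0$. The engine is the splitting
\[
\langle x,x_\alpha^\prime\rangle=\langle x,x_\alpha^\prime\wedge u^\prime\rangle+\langle x,(x_\alpha^\prime-u^\prime)^+\rangle,\qquad u^\prime\in E^\prime_+ .
\]
For fixed $u^\prime$ the first summand tends to $0$: in the $uaw^*$, $uaw$ and $un$ cases because $x_\alpha^\prime\wedge u^\prime$ converges to $0$ weak$^*$, weakly, or in norm and $x$ evaluates it; in the $uo$ case, and likewise for a bounded disjoint net (which is $uo$-null by \cite{GTX:16}), because $x_\alpha^\prime\wedge u^\prime=|x_\alpha^\prime|\wedge u^\prime\xrightarrow{o}0$ while $x\in E^a$ makes $\mathscr{F}_x$ order continuous on $E^\prime$. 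Thus the disjoint-net condition $(7)$ is handled on the same footing as $(3)$, so no separate containment of duals is required.

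The crux — and the step I expect to be the main obstacle — is to bound the tail $\langle x,(x_\alpha^\prime-u^\prime)^+\rangle$ uniformly in $\alpha$ by a single $u^\prime$. What is needed is the equi-integrability consequence of $x\in E^a$: for every $M,\varepsilon>0$ there exists $u^\prime\in E^\prime_+$ with $\langle x,(\phi-u^\prime)^+\rangle\le\varepsilon$ for all $\phi\in E^\prime_+$ with $\|\phi\|\le M$. This is exactly where the hypothesis $x\in E^a$ enters, and I would prove it by a gliding hump: were it to fail for some $\varepsilon_0$, then choosing $\phi_n$ with $\langle x,(\phi_n-m\sum_{i<n}\phi_i)^+\rangle>\varepsilon_0$ for rapidly growing $m$ and disjointifying would produce a bounded disjoint sequence in $E^\prime_+$ on which $\mathscr{F}_x$ stays above $\varepsilon_0/2$, contradicting $(8)$ (which holds since $x\in E^a$). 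Granting this, I pick $u^\prime$ for $\varepsilon$ and the uniform norm bound of $(x_\alpha^\prime)$, so that $\limsup_\alpha\langle x,x_\alpha^\prime\rangle\le\varepsilon$ in each case; letting $\varepsilon\downarrow0$ yields $(1),(3),(5),(7)$ and, when $E^\prime$ is order continuous, $(10)$, closing the cycle.
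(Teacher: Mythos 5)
Your proposal is correct and follows essentially the same route as the paper: all conditions are collapsed onto the bounded-disjoint-sequence condition $(8)$ (equivalently $(9)$, via the Meyer--Nieberg duality for disjoint sequences), and the decisive step is the uniform tail estimate $\sup\{\langle |x|,(\phi-u^\prime)^+\rangle:\phi\in E^\prime_+,\ \Vert\phi\Vert\leq M\}<\varepsilon$, after which each unbounded-null net is handled by the splitting $x_\alpha^\prime=x_\alpha^\prime\wedge u^\prime+(x_\alpha^\prime-u^\prime)^+$. The only real difference is that the paper obtains this tail estimate by citing \cite[Theorem~4.36]{AB:06} (applied to the seminorm $|\mathscr{F}_x|(|\cdot|)$ on the solid bounded set $B_{E^\prime}$), whereas you re-derive it by a gliding-hump/disjointification argument --- which is exactly how that cited theorem is proved, so your write-up amounts to inlining the reference rather than taking a genuinely different path.
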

\begin{proof}
	
	$(1)\Rightarrow(2)$, $(3)\Rightarrow(4)$, $(5)\Rightarrow(6)$ and $(7)\Rightarrow(8)$ are obvious.  $(1)\Rightarrow(5)$ and $(2)\Rightarrow(6)$ by $uaw$-convergence implies $uaw^*$-convergence.
	
	$(1)\Rightarrow(3)$. Let $(x_\alpha^\prime)$ be a $uo$-null in $E^\prime$ and $\mathscr{F}_x\in (E^\prime)^\sim_{uaw^*}$. Hence $|x_\alpha^\prime|\wedge u^\prime\leq y_\beta^\prime\downarrow 0$ in $E^\prime$ for all $u^\prime\in 
	E^\prime_+$. Clearly, for a positive element $x\in E$, we have $(|x_\alpha^\prime|\wedge u^\prime)(x)\leq (y_\beta^\prime)(x)\rightarrow 0$. Since $\mathscr{F}_x\in (E^\prime)^\sim_{uaw^*}$, therefore $\mathscr{F}_x\in (E^\prime)^\sim_{uo}$. $(2)\Rightarrow(4)$ is similar.
	
	$(1)\Rightarrow(7)$, $(2)\Rightarrow(8)$, $(5)\Rightarrow(7)$ and $(6)\Rightarrow(8)$. It follows from \cite[Lemma~2]{Z:16} and \cite[Lemma~2.3]{W:19} that every disjoint net is $uaw$-null and $uaw^*$-null. According to \cite[Corollary~3.6]{GTX:16}, we have $(4)\Rightarrow(8)$.
	
	$(8)\Rightarrow(1)$. Since $(x_n^\prime)$ is a disjoint sequence, hence for a sequence $(y_n^\prime)$ statisfying $\{y_n^\prime\in [-|x_n^\prime|,|x_n^\prime|]\}$ is also disjoint. Theorefore $\sup_{y_n^\prime\in [-|x_n^\prime|,|x_n^\prime|]} |\mathscr{F}_x(y_n^\prime)|=|\mathscr{F}_x|(|x_n^\prime|)\rightarrow0$ for any disjoint sequence $(x_n^\prime)$ in $B_{E^\prime}$. Using \cite[Theorem~4.36]{AB:06}, let the seminorm be $|\mathscr{F}_x|(|\cdot|)$, $T$ be idential operator and bounded solid set be $B_{E^\prime}$. So we have that, for any $\epsilon>0$, there exists $u^\prime\in E^\prime_+$ such that	$$\sup_{x^\prime\in B_{E^\prime}}|\mathscr{F}_x|(|x^\prime|-|x^\prime|\wedge u^\prime)=\sup_{x\in B_{E^\prime}}|\mathscr{F}_x|\big((|x^\prime|-u)^+\big)<\epsilon.$$
	
	For a $uaw^*$-null net $(x_\alpha^\prime)\subset B_{E^\prime}$, that is $|x_\alpha^\prime|\wedge u^\prime\xrightarrow{w^*}0$. Hence $|\mathscr{F}_x|(|x_\alpha^\prime|\wedge u^\prime)\rightarrow0$. Theorefore $|\mathscr{F}_x(x_\alpha^\prime)|\leq|\mathscr{F}_x|(|x_\alpha^\prime|)\rightarrow0$.
	
	$(8)\Leftrightarrow(9)$. According to \cite[Corollary~2.3.3]{MN:91}, let $A=[-|x|,|x|]$ and $B=B_{E^\prime}$. Every disjoint sequence in $[0,|x|]$ is norm convergent to zero iff every disjoint sequence in $[-|x|,|x|]$ is uniform convergences to zero on $B$. Since $(x_n^\prime)$ is disjoint iff $(|x_n^\prime|)$ is disjoint and $\mathscr{F}_x(|x_n^\prime|)=\sup_{g\in [-|x|,|x|]}|g(x_n^\prime)|$, so $\mathscr{F}_x(x_n^\prime)\rightarrow0$ for any bounded disjoint sequence $(x_n^\prime)$ in $E^\prime$ iff $(x_n^\prime)$ is uniform convergence to zero on $A$. We have the result.
	
	$(5)\Leftrightarrow(11)$. Suppose now that $E^\prime$ is order continuous. According to \cite[Theorem~2.4]{W:19}, $uaw$ and $uaw^*$-topologies coincide with the
	$un$-topology. The result can be easily verified.
	
\end{proof}

\begin{theorem}\label{uo,uaw,un-functional=d}
(extension of \cite[Theorem~2.3]{GLX:18})	Let $E$ be a Banach lattice, for any $f\in E^\sim_n$, the following conditions are equivalent.
	\begin{enumerate}
		\item $f\in E^\sim_{uo}$.
		\item $f(x_n)\rightarrow0$ for any bounded $uo$-null sequence $(x_n)$ in $E$.
		\item $f\in E^\sim_{uaw}$.
		\item $f(x_n)\rightarrow0$ for any bounded $uaw$-null sequence $(x_n)$ in $E$.
		\item $f\in E^\sim_{d}$.
		\item $f(x_n)\rightarrow0$ for any bounded disjoint sequence $(x_n)$ in $E$.
		\item Every disjoint sequence in $[0,|f|]$ is norm convergent to zero.
		
		In addition, if $E$ has order continuous norm, these conditions are equivalent to
		
		\item $f\in E^\sim_{un}$.
		
		\item $f(x_n)\rightarrow0$ for any bounded $un$-null sequence $(x_n)$ in $E$.
	\end{enumerate}
\end{theorem}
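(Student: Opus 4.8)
The plan is to mirror the proof of Theorem~\ref{uaw*,uo,uaw,un-functional=d-dual}, now reading $f$ as an element of $E'$ acting on $E$ and replacing the automatic $w^*$-continuity of an evaluation functional by the standing hypothesis $f\in E^\sim_n$. First I would dispose of the routine implications. Since a net statement specializes to sequences, $(1)\Rightarrow(2)$, $(3)\Rightarrow(4)$ and $(5)\Rightarrow(6)$ are immediate; and because every norm bounded disjoint sequence is both $uo$-null and $uaw$-null (by \cite[Corollary~3.6]{GTX:16} and \cite[Lemma~2]{Z:16}), I also get $(2)\Rightarrow(6)$ and $(4)\Rightarrow(6)$. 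Thus each of (1)--(5) already forces the disjoint-sequence condition (6), and it remains to close the loop through (6) and (7).

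Next I would establish $(6)\Leftrightarrow(7)$. Condition (6) is equivalent to $|f|(|x_n|)\to0$ for every norm bounded disjoint $(x_n)$: the inequality $|f(x_n)|\le|f|(|x_n|)$ gives one direction, and for the other I pick $y_n\in[-|x_n|,|x_n|]$ with $f(y_n)$ close to $|f|(|x_n|)$, the $(y_n)$ being again disjoint and bounded. Writing $|f|(|x_n|)=\sup_{g\in[-|f|,|f|]}|g(x_n)|$ recasts (6) as uniform convergence to zero of disjoint sequences of $B_E$ on the solid set $[-|f|,|f|]\subset E'$, while (7) says exactly that disjoint sequences of $[-|f|,|f|]\subset E'$ converge uniformly to zero on $B_E$ (norm convergence in $E'$ being uniform convergence on $B_E$). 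These two are interchanged by the symmetry of disjoint sequences, \cite[Corollary~2.3.3]{MN:91}; equivalently, (7) just says $|f|\in(E')^a$.

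The heart of the argument is $(7)\Rightarrow(1)$ and $(7)\Rightarrow(3)$. Here I would invoke \cite[Theorem~4.36]{AB:06} with the continuous seminorm $\rho=|f|(|\cdot|)$ on $E$, the identity operator, and the bounded solid set $B_E$: since $\rho(x_n)=|f|(|x_n|)\to0$ along every disjoint sequence in $B_E$ (this is (6)), for each $\epsilon>0$ there is $u\in E_+$ with $\sup_{x\in B_E}|f|\bigl((|x|-u)^+\bigr)<\epsilon$. Given a norm bounded $uo$-null (resp. $uaw$-null) net $(x_\alpha)$, I scale it into $B_E$ and estimate $|f(x_\alpha)|\le|f|(|x_\alpha|\wedge u)+|f|\bigl((|x_\alpha|-u)^+\bigr)\le|f|(|x_\alpha|\wedge u)+\epsilon$. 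In the $uo$ case $|x_\alpha|\wedge u\xrightarrow{o}0$, so $|f|(|x_\alpha|\wedge u)\to0$ because $|f|\in E^\sim_n$; in the $uaw$ case $|x_\alpha|\wedge u\xrightarrow{w}0$ and the same conclusion follows from $|f|\in E'$. Letting $\epsilon\downarrow0$ gives $f(x_\alpha)\to0$, so $f\in E^\sim_{uo}$ and $f\in E^\sim_{uaw}$. Since a disjoint net is $uaw$-null, $(3)\Rightarrow(5)$ is free, and the chains $(1)\Rightarrow(2)\Rightarrow(6)\Leftrightarrow(7)\Rightarrow(1)$, together with $(7)\Rightarrow(3)\Rightarrow(4)\Rightarrow(6)$ and $(3)\Rightarrow(5)\Rightarrow(6)$, make (1)--(7) equivalent.

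Finally, when $E$ has order continuous norm, $uaw$- and $un$-convergence coincide on $E$ by \cite[Theorem~2.4]{W:19}, whence $E^\sim_{un}=E^\sim_{uaw}$ and the sequential versions agree as well, yielding $(3)\Leftrightarrow(8)$ and $(4)\Leftrightarrow(9)$. I expect the main obstacle to be the implication $(7)\Rightarrow(1)$: it is precisely here that $f\in E^\sim_n$ is indispensable, since after the Aliprantis--Burkinshaw truncation one is left with the order-null piece $|x_\alpha|\wedge u$, and only order continuity of $|f|$ (rather than mere boundedness, which sufficed in the $uaw$ case) forces $|f|(|x_\alpha|\wedge u)\to0$. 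A secondary point needing care is the correct placement of the solid sets in the duality $(6)\Leftrightarrow(7)$, now that the order interval $[-|f|,|f|]$ lives in $E'$ and $B_E$ must be read inside $E''$.
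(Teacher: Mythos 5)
Your proposal is correct and follows essentially the same route as the paper: the disjointness facts for $uo$/$uaw$-null sequences, the symmetry result \cite[Corollary~2.3.3]{MN:91} for $(6)\Leftrightarrow(7)$, the truncation argument of \cite[Theorem~4.36]{AB:06} (with seminorm $|f|(|\cdot|)$, where order continuity of $|f|$ handles the $|x_\alpha|\wedge u$ piece in the $uo$ case) for the key implications, and \cite[Theorem~2.4]{W:19} for the order-continuous addendum. The only difference is cosmetic: the paper outsources the equivalence $(1)\Leftrightarrow(2)\Leftrightarrow(6)\Leftrightarrow(7)$ to \cite[Theorem~2.3]{GLX:18}, whereas you re-derive it inline using exactly the machinery that the cited proof itself employs.
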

\begin{proof}
	$(1)\Leftrightarrow(2)\Leftrightarrow(6)\Leftrightarrow(7)$. By \cite[Theorem~2.3]{GLX:18}.
	
	$(3)\Leftrightarrow(9)$ is similar to $(5)\Leftrightarrow(11)$ of Theorem \ref{uaw*,uo,uaw,un-functional=d-dual}.
	
	$(3)\Rightarrow(4)$ and $(5)\Rightarrow(6)$ are obvious. $(3)\Rightarrow(5)$ and $(4)\Rightarrow(6)$ by every disjoint net is $uaw$-null.
	
	$(6)\Rightarrow(3)$.  Using \cite[Theorem~4.36]{AB:06}, the proof is similar to $(3)\Rightarrow(1)$ of \cite[Theorem~2.3]{GLX:18} and $(8)\Rightarrow(1)$ of Theorem \ref{uaw*,uo,uaw,un-functional=d-dual}.
	\end{proof}
Similarly, we have,
\begin{theorem}\label{uaw,un-functional=d}
	Let $E$ be a Banach lattice, for any $f\in E^\prime$, the following conditions are equivalent.
	\begin{enumerate}
		\item $f\in E^\sim_{uaw}$.
		\item $f(x_n)\rightarrow0$ for any bounded $uaw$-null sequence $(x_n)$ in $E$.
		\item $f\in E^\sim_{d}$.
		\item $f(x_n)\rightarrow0$ for any bounded disjoint sequence $(x_n)$ in $E$.
		\item Every disjoint sequence in $[0,|f|]$ is norm convergent to zero.
		
		In addition, if $E$ has order continuous norm, these conditions are equivalent to
		
		\item $f\in E^\sim_{un}$.
		
		\item $f(x_n)\rightarrow0$ for any bounded $un$-null sequence $(x_n)$ in $E$.
	\end{enumerate}
\end{theorem}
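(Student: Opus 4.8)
The plan is to mirror the proof of Theorem \ref{uaw*,uo,uaw,un-functional=d-dual}, simply exchanging the dual pair $(E',E)$ for $(E,E')$ and the seminorm $|\mathscr{F}_x|(|\cdot|)$ for $|f|(|\cdot|)$. The one conceptual point worth flagging is that no order continuity of $f$ enters anywhere, which is exactly why the standing hypothesis here is merely $f\in E'$ rather than $f\in E^\sim_n$ as in Theorem \ref{uo,uaw,un-functional=d} (there the inclusion of an $E^\sim_{uo}$ clause forced $f$ to be order continuous). First I would dispose of the routine implications: $(1)\Rightarrow(2)$ and $(3)\Rightarrow(4)$ are immediate, since each continuity condition quantified over all bounded nets of a given type restricts to sequences. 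For $(1)\Rightarrow(3)$ and $(2)\Rightarrow(4)$ I would invoke \cite[Lemma~2]{Z:16} and \cite[Lemma~2.3]{W:19}, by which every bounded disjoint net is $uaw$-null; hence $E^\sim_{uaw}\subseteq E^\sim_{d}$ and likewise at the level of sequences.

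Next I would establish $(4)\Leftrightarrow(5)$, following $(8)\Leftrightarrow(9)$ there. The key observation is that if $(x_n)$ is a bounded disjoint sequence then, for any choice of $y_n\in[-|x_n|,|x_n|]$, the sequence $(y_n)$ is again bounded and disjoint, and $\sup_{|y|\le|x_n|}|f(y)|=|f|(|x_n|)$. A routine contradiction argument then shows that $(4)$ is equivalent to $|f|(|x_n|)\to0$ for every bounded disjoint $(x_n)$, that is, to $|f|(z_n)\to0$ for every bounded disjoint $(z_n)\subset E_+$. Applying \cite[Corollary~2.3.3]{MN:91} with $A=[-|f|,|f|]$, a solid subset of $E'$, and $B=B_E$ then converts this into the assertion that every disjoint sequence in $[0,|f|]$ is norm convergent to zero, which is $(5)$.

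The crux is the implication $(4)\Rightarrow(1)$, carried out just as $(8)\Rightarrow(1)$ in Theorem \ref{uaw*,uo,uaw,un-functional=d-dual}. From $(4)$ we have $|f|(|x_n|)\to0$ for every disjoint sequence in the bounded solid set $B_E$, so \cite[Theorem~4.36]{AB:06}, applied with the seminorm $\rho=|f|(|\cdot|)$, the identity operator, and the solid set $B_E$, produces for each $\epsilon>0$ an element $u\in E_+$ with $\sup_{x\in B_E}|f|\bigl((|x|-u)^+\bigr)=\sup_{x\in B_E}|f|(|x|-|x|\wedge u)<\epsilon$. Given a bounded $uaw$-null net $(x_\alpha)$, after rescaling into $B_E$ we have $|x_\alpha|\wedge u\xrightarrow{w}0$, whence $|f|(|x_\alpha|\wedge u)\to0$ because $|f|\in E'$; therefore $|f(x_\alpha)|\le|f|(|x_\alpha|)\le|f|(|x_\alpha|\wedge u)+\epsilon$ forces $\limsup_\alpha|f(x_\alpha)|\le\epsilon$, and letting $\epsilon\downarrow0$ gives $f\in E^\sim_{uaw}$. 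This closes the cycle $(1)\Rightarrow(2)\Rightarrow(4)\Rightarrow(1)$ and, together with $(1)\Rightarrow(3)\Rightarrow(4)$ and $(4)\Leftrightarrow(5)$, shows that $(1)$--$(5)$ are equivalent. I expect this to be the main obstacle, as it is the only step using a nontrivial external theorem; the point to watch is that the concluding estimate relies solely on $|f|\in E'$ and never on any order continuity of $f$.

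Finally, under the additional assumption that $E$ has order continuous norm, I would fold in $(6)$ and $(7)$ exactly as in $(5)\Leftrightarrow(11)$ of Theorem \ref{uaw*,uo,uaw,un-functional=d-dual}. By \cite[Theorem~2.4]{W:19} the $uaw$- and $un$-topologies coincide on such $E$, so $E^\sim_{uaw}=E^\sim_{un}$ and $(1)\Leftrightarrow(6)$; the implication $(6)\Rightarrow(7)$ is trivial, while $(7)\Rightarrow(4)$ holds because, by \cite[Proposition~3.5]{KMT:16}, every bounded disjoint sequence is $un$-null when $E$ is order continuous. This incorporates $(6)$ and $(7)$ into the chain of equivalences and completes the proof.
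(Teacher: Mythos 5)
Your proposal is correct and is essentially the paper's own argument: the paper offers no separate proof of this theorem (it is introduced only by ``Similarly, we have''), the intended proof being exactly the transcription of Theorem \ref{uaw*,uo,uaw,un-functional=d-dual} (and Theorem \ref{uo,uaw,un-functional=d}) to the pair $(E,E^\prime)$, which is what you carry out. Your details match the paper's template step for step: disjoint nets are $uaw$-null for $(1)\Rightarrow(3)$, \cite[Corollary~2.3.3]{MN:91} with $A=[-|f|,|f|]$, $B=B_E$ for $(4)\Leftrightarrow(5)$, \cite[Theorem~4.36]{AB:06} with the seminorm $|f|(|\cdot|)$ and solid set $B_E$ for the crux $(4)\Rightarrow(1)$, and the coincidence of the $uaw$- and $un$-topologies together with \cite[Proposition~3.5]{KMT:16} for items $(6)$ and $(7)$ under order continuity.
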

Using the above results, we obtain the exact form of these duals.
\begin{theorem}\label{uo-un-uaw-uaw^*-d=^a}
	Let $E$ be a Banach lattice, The following relations hold.
	\begin{enumerate}
		\item\label{11} $E^\sim_{uo}=(E^\sim_n)^a\subset E^\sim_{uaw}=E^\sim_d=(E^\prime)^a\subset  E^\sim_{un}\subset E^\prime$.
		\item\label{12} $(E^\prime)^\sim_{uaw^*}=E^a\subset (E^\prime)^\sim_{uo}=\big((E^\prime)^\sim_n)^a\subset (E^\prime)^\sim_{uaw}=(E^\prime)^\sim_{d}=(E^{\prime\prime})^a\subset (E^\prime)^\sim_{un}\subset E^{\prime\prime}$.
	\end{enumerate}
\end{theorem}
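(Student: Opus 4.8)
The plan is to reduce every relation in the statement to the disjoint-sequence description of the order continuous part recorded just before the theorem, namely $X^a = \{z \in X : \text{every disjoint sequence in } [0,|z|] \text{ is norm convergent to } 0\}$ for a Banach lattice $X$, and then to read off each dual space from the equivalences already established in Theorems \ref{uaw*,uo,uaw,un-functional=d-dual}, \ref{uo,uaw,un-functional=d} and \ref{uaw,un-functional=d}. The key observation is that in each of those theorems one of the equivalent conditions is precisely ``every disjoint sequence in $[0,|f|]$ (resp. $[0,|x|]$) is norm convergent to zero'', which is exactly the membership condition for the relevant order continuous part. So the equalities will be direct translations and only the inclusions will need a small argument.

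First I would prove (1). The equalities $E^\sim_{uaw} = E^\sim_d = (E^\prime)^a$ are immediate from Theorem \ref{uaw,un-functional=d}: its conditions (1), (3), (5) are equivalent, and (5) is the defining condition for membership of $f$ in the order continuous part $(E^\prime)^a$ of the Banach lattice $E^\prime$. For the leftmost equality $E^\sim_{uo} = (E^\sim_n)^a$, recall from Proposition \ref{123}(2) that $E^\sim_{uo} \subseteq E^\sim_n$, so Theorem \ref{uo,uaw,un-functional=d} applies to the relevant functionals; its equivalence (1)$\Leftrightarrow$(7) gives $f \in E^\sim_{uo}$ iff every disjoint sequence in $[0,|f|]$ norm converges, i.e. iff $f \in (E^\sim_n)^a$. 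The inclusion $(E^\sim_n)^a \subseteq (E^\prime)^a$ then follows from the fact that $E^\sim_n$ is a band, hence an ideal, of $E^\prime$: order intervals, disjointness and the norm computed inside $E^\sim_n$ agree with those computed in $E^\prime$, so in fact $(E^\sim_n)^a = (E^\prime)^a \cap E^\sim_n$. Finally $(E^\prime)^a = E^\sim_{uaw} \subseteq E^\sim_{un}$ because every $un$-null net is $uaw$-null (norm convergence of $|x_\alpha|\wedge u$ forces weak convergence), so every $uaw$-continuous functional is $un$-continuous; and $E^\sim_{un} \subseteq E^\prime$ by definition.

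Statement (2) is obtained by applying (1) verbatim to the Banach lattice $E^\prime$, which yields $(E^\prime)^\sim_{uo} = \big((E^\prime)^\sim_n\big)^a \subseteq (E^\prime)^\sim_{uaw} = (E^\prime)^\sim_d = (E^{\prime\prime})^a \subseteq (E^\prime)^\sim_{un} \subseteq E^{\prime\prime}$, together with the two facts specific to $uaw^*$. For $(E^\prime)^\sim_{uaw^*} = E^a$ I would use Proposition \ref{123}(1) to realize each bounded $uaw^*$-continuous functional on $E^\prime$ as evaluation $\mathscr{F}_x$ at some $x \in E$, and then the equivalence (1)$\Leftrightarrow$(9) of Theorem \ref{uaw*,uo,uaw,un-functional=d-dual} to conclude that $\mathscr{F}_x$ is $uaw^*$-continuous exactly when every disjoint sequence in $[0,|x|]$ norm converges, i.e. exactly when $x \in E^a$. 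The remaining inclusion $E^a \subseteq (E^\prime)^\sim_{uo}$ is then the equivalence (1)$\Leftrightarrow$(3) of the same theorem, which says $\mathscr{F}_x \in (E^\prime)^\sim_{uaw^*}$ iff $\mathscr{F}_x \in (E^\prime)^\sim_{uo}$.

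The routine parts are the direct translations from the three preceding theorems; the step demanding the most care is the pair of inclusions between order continuous parts of nested duals, namely $(E^\sim_n)^a \subseteq (E^\prime)^a$ in (1) and $E^a \subseteq (E^\prime)^\sim_{uo}$ in (2). These are \emph{not} formal consequences of continuity-type implications — for some of them the underlying convergence implications actually run in the opposite direction — and they rely on $E^\sim_n$ being an ideal of $E^\prime$ so that the intrinsic disjoint-sequence characterization of the order continuous part is inherited, together with careful bookkeeping of the canonical embeddings $E \hookrightarrow E^{\prime\prime}$ and $E^\prime \hookrightarrow E^{\prime\prime\prime}$ when interpreting the second-dual spaces.
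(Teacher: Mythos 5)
Your proposal is correct and follows essentially the same route as the paper: both reduce each dual to the disjoint-sequence characterization of the order continuous part via Proposition \ref{123} and the equivalences in Theorems \ref{uaw*,uo,uaw,un-functional=d-dual}, \ref{uo,uaw,un-functional=d} and \ref{uaw,un-functional=d}. In fact your write-up is more careful than the paper's one-line proof, since you explicitly justify the inclusions $(E^\sim_n)^a\subset(E^\prime)^a$ (via $E^\sim_n$ being a band, hence an ideal, of $E^\prime$) and $E^\sim_{uaw}\subset E^\sim_{un}$ (via $un$-null $\Rightarrow$ $uaw$-null), steps the paper leaves implicit.
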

\begin{proof}
	According to Proposition \ref{123}, we have $(E^\prime)^\sim_{uaw^*}\subset E$, $E^\sim_{uo}\subset E^\sim_n$, $E^\sim_{uaw}\subset E^\prime$, $E^\sim_{d}\subset E^\prime$ and $E^\sim_{un}\subset E^\prime$. It follows from Theorem \ref{uaw*,uo,uaw,un-functional=d-dual}, \ref{uo,uaw,un-functional=d} and \ref{uaw,un-functional=d} that these duals are the order continuous part, therefore we have the result.
	\end{proof}
The following example shows the bounded duals for unbounded convergence in classical Banach lattices.
\begin{example}\label{d-example}
	\begin{align*}
		&(c_0)^\sim_{uo}=(c_0)^\sim_{uaw}=(c_0)^\sim_d=(c_0)^\sim_{un}=(l_1)^a=l_1,\\&(l_1)^\sim_{uaw^*}=(l_1)^\sim_{uo}=(l_1)^\sim_{uaw}=(l_1)^\sim_d=(l_\infty)^a=(c_0)^a=c_0,\\
		&(l_\infty)^\sim_{uaw^*}=(l_\infty)^\sim_{uo}=(l_1)^a=l_1, \\
		&(l_\infty)^\sim_{uaw}=(l_\infty)^\sim_{d}=(l_\infty)^\sim_{un}=ba(2^\mathbb{N}),\\
		&(L_1[0,1])^\sim_{uo}=(L_1[0,1])^\sim_{uaw}=(L_1[0,1])^\sim_{d}=(L_\infty[0,1])^a=\{0\},\\
		&(L_\infty[0,1])^\sim_{uaw^*}=(L_\infty[0,1])^\sim_{uo}=(L_1[0,1])^a=L_1[0,1],\\
		&(L_\infty[0,1])^\sim_{uaw}=(L_\infty[0,1])^\sim_{d}=(L_\infty[0,1])^\sim_{un}=(ba[0,1])^a=ba[0,1],\\
		&(C[0,1])^\sim_{uo}=(\{0\})^a=\{0\},\\
		&(C[0,1])^\sim_{uaw}=(C[0,1])^\sim_{d}=(C[0,1])^\sim_{un}=rca[0,1].
	\end{align*}
\end{example}

As an application of these results, we conclude the paper with characterizations of the order continuity and reflexivity of Banach lattices.
\begin{theorem}\label{}
	For a Banach lattice $E$, the following holds.
	\begin{enumerate}
		\item $E$ has order continuous norm iff $(E^\prime)^\sim_{uaw^*}=E$;
		\item $E^\prime$ has order continuous norm iff $E^\sim_{uaw}=E^\sim_d=E^\sim_{un}=E^\prime$;
		\item (extension of \cite[Theorem~5]{W:77}) $E$ and $E^\prime$ are order continuous iff $E^\sim_{uo}=E^\sim_{uaw}=E^\sim_d=E^\sim_{un}=E^\prime$;
		\item $E$ is reflexive iff $(E^\prime)^\sim_{uaw^*}=(E^\prime)^\sim_{uo}=(E^\prime)^\sim_{uaw}=(E^\prime)^\sim_{d}=(E^\prime)^\sim_{un}=E^{\prime\prime}$.
	\end{enumerate}
\end{theorem}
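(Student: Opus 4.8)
The plan is to read all four equivalences off the explicit identifications recorded in Theorem~\ref{uo-un-uaw-uaw^*-d=^a}, combined with two classical facts: a Banach lattice $X$ is order continuous precisely when $X^a=X$ (equivalently $X^\sim_n=X^\prime$), and a reflexive Banach lattice is automatically order continuous while $E$ is reflexive exactly when $E=E^{\prime\prime}$. The structural observation that makes everything routine is that each displayed line of Theorem~\ref{uo-un-uaw-uaw^*-d=^a} is a chain of inclusions whose two extreme terms have been computed; hence as soon as the \emph{smallest} term coincides with the \emph{largest}, every intermediate dual is squeezed to the same space. So in each item I only need to decide when the leftmost term fills up the ambient dual or bidual.

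For (1) there is nothing to collapse: Theorem~\ref{uo-un-uaw-uaw^*-d=^a} gives $(E^\prime)^\sim_{uaw^*}=E^a$, and since $E^a\subseteq E$ always, $(E^\prime)^\sim_{uaw^*}=E$ holds iff $E^a=E$, i.e.\ iff $E$ has order continuous norm. For (2) I use the first line, in which $E^\sim_{uaw}=E^\sim_d=(E^\prime)^a$ is the smallest of the three duals involved and $E^\prime$ the ambient space, with $E^\sim_{un}$ trapped between them. If $E^\prime$ is order continuous then $(E^\prime)^a=E^\prime$, forcing $E^\sim_{uaw}=E^\sim_d=E^\sim_{un}=E^\prime$; conversely $E^\sim_{uaw}=E^\prime$ means $(E^\prime)^a=E^\prime$, i.e.\ $E^\prime$ is order continuous.

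For (3) the smallest term of the first line is $E^\sim_{uo}=(E^\sim_n)^a$, and I claim $(E^\sim_n)^a=E^\prime$ iff both $E$ and $E^\prime$ are order continuous. Since $(E^\sim_n)^a\subseteq E^\sim_n\subseteq E^\prime$, the equality $(E^\sim_n)^a=E^\prime$ splits into $E^\sim_n=E^\prime$ (which is exactly order continuity of $E$) together with $(E^\sim_n)^a=E^\sim_n$, and the latter, after substituting $E^\sim_n=E^\prime$, reads $(E^\prime)^a=E^\prime$ (order continuity of $E^\prime$); the converse direction reverses these two steps. Once $E^\sim_{uo}=E^\prime$, the whole first line collapses to $E^\prime$, and conversely the stated equality forces its leftmost term $E^\sim_{uo}$ to equal $E^\prime$.

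Finally, for (4) I apply the same idea to the second line, whose smallest term is $(E^\prime)^\sim_{uaw^*}=E^a$ and whose ambient space is $E^{\prime\prime}$. The heart of the matter is the equivalence $E^a=E^{\prime\prime}\iff E$ reflexive: if $E$ is reflexive then $E=E^{\prime\prime}$ and, crucially, $E$ is order continuous, so $E^a=E=E^{\prime\prime}$; conversely $E^a\subseteq E\subseteq E^{\prime\prime}$ shows that $E^a=E^{\prime\prime}$ forces $E=E^{\prime\prime}$, i.e.\ reflexivity. The step I expect to be the only genuine obstacle is precisely this forward implication: one must \emph{not} attempt to characterize reflexivity by ``$E$ and $E^\prime$ both order continuous'' (which is false, as $c_0$ shows), but instead invoke that reflexivity of a Banach lattice already \emph{implies} order continuity, so that $E^a$ swells all the way up to $E$ before reflexivity identifies $E$ with $E^{\prime\prime}$. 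Granting $E^a=E^{\prime\prime}$, the chain-collapse squeezes $(E^\prime)^\sim_{uo}$, $(E^\prime)^\sim_{uaw}$, $(E^\prime)^\sim_{d}$ and $(E^\prime)^\sim_{un}$ all to $E^{\prime\prime}$, and the reverse implication needs only equality of the two extreme terms.
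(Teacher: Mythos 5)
Your proof is correct and follows essentially the same route as the paper's: both read each item off the identifications $E^\sim_{uo}=(E^\sim_n)^a$, $E^\sim_{uaw}=E^\sim_d=(E^\prime)^a$, $(E^\prime)^\sim_{uaw^*}=E^a$, etc.\ from Theorem~\ref{uo-un-uaw-uaw^*-d=^a}, combined with the classical equivalences ``order continuous $\Leftrightarrow X^a=X$'' and ``reflexive $\Leftrightarrow E^a=E=E^{\prime\prime}$''. The only difference is that you spell out the squeeze in the inclusion chains and justify the background facts (e.g.\ $E^\sim_n=E^\prime$ iff $E$ is order continuous, and reflexivity implies order continuity) that the paper simply asserts.
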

\begin{proof}
$(1)$. $E$ is order continuous iff $E^a=E$. It follows from Theorem \ref{uo-un-uaw-uaw^*-d=^a}\eqref{12} that $E$ is order continuous iff $(E^\prime)^\sim_{uaw^*}=E^a=E$.
	
$(2)$. $E^\prime$ is order continuous iff $(E^\prime)^a=E^\prime$. According to Theorem \ref{uo-un-uaw-uaw^*-d=^a}\eqref{11}, we have $E^\sim_{uaw}=E^\sim_d=(E^\prime)^a$, therefore $E^\prime$ is order continuous iff $E^\sim_{uaw}=E^\sim_d=E^\sim_{un}=(E^\prime)^a=E^\prime$.
	
$(3)$. $E$ and $E^\prime$ are order continuous iff $(E^\sim_n)^a=E^\prime$. Since $E^\sim_{uo}=(E^\sim_n)^a$, therefore $E$ and $E^\prime$ are order continuous iff $E^\sim_{uo}=E^\sim_{uaw}=E^\sim_d=E^\sim_{un}=(E^\sim_n)^a=(E^\prime)^a=E^\prime$.
	
$(4)$. $E$ is reflexive iff $E^a=E=E^{\prime\prime}$. Hence $E$ is reflexive iff  $(E^\prime)^\sim_{uaw^*}=(E^\prime)^\sim_{uo}=(E^\prime)^\sim_{uaw}=(E^\prime)^\sim_{d}=(E^\prime)^\sim_{un}=E^a=E=E^{\prime\prime}$ by $(E^\prime)^\sim_{uaw^*}=E^a$.
	\end{proof}
So far, we still do not know what the $E^\sim_{un}$ is. So, we state the problem in here.
\begin{problem}
	What is the exact form of $E^\sim_{un}$?
\end{problem}
\noindent \textbf{Acknowledgement.} The research is supported by National Natural Science Foundation of China (NSFC:51875483).

\end{document}